\newenvironment{proof}[1][Proof]{\textbf{#1 }}{$\square$}
\newtheorem{theorem}{Theorem}
\newtheorem{acknowledgement}[theorem]{Acknowledgement}
\newtheorem{proposition}[theorem]{Proposition}
\begin{document}

\title{Para-Sasaki-like Manifolds with Generalized Symmetric Metric Connection}
\date{}
\maketitle
\begin{center}
\c{S}enay BULUT, Pınar İNSELÖZ\\[0pt]
Eskişehir Technical University, Department of Mathematics, Eskişehir, TURKEY\\[0pt]
\texttt{skarapazar@eskisehir.edu.tr, pinardeniz136@gmail.com}

\end{center}

\begin{abstract}
The present paper deals with the generalized symmetric metric connection defined on para-Sasaki-like manifolds. We derive a relation between the Levi-Civita connection and the generalized symmetric metric conneciton on the considered manifold. We investigate the curvature tensor, the Ricci tensor and scalar curvature tensor with respect to the generalized symmetric metric connection. We study para-Sasaki-like solitons on para-Sasaki-like manifolds with  the generalized symmetric metric connection. Finally, we construct two examples of para-Sasaki-like manifolds admitting generalized symmetric metric connection and verify our some results.

\end{abstract}

\textbf{Key Words} Almost paracontact almost paracomplex Riemannian manifold, Para-Sasaki-like manifold, para-Ricci-like soliton, para-Einstein-like manifold, Generalized symmetric metric connection.\newline

\textbf{2000 MR Subject Classification} 53C15, 53C25, 53C50. \label{first}

\section{Introduction}

The study of a semisymmetric linear connection on a differentiable manifold was initiated by Friedmann and Schouten \cite{Fri} in 1924. Later, in \cite{Hayden} the idea of semisymmetric linear connection with a torsion different from zero was introduced and studied. This connection has been studied on manifolds equipped with different structures such as almost contact B-metric manifolds, almost contact manifolds, Kenmotsu manifolds \cite{Sharfuddin, Tripati, bulut2}. A quarter-symmetric linear connection on a differentiable manifold was introduced by Golab \cite{Golab}. Rastogi \cite{Rastogi1, Rastogi2} started the systematic study of the quarter-symmetric metric connection. Some properties of semisymmetric metric connections have been investigated on manifolds having special structures \cite{Mondal, bulut1, Mishra, Yano}. A generalized symmetric metric connection which is the generalized form of the semisymmetric metric connection and the quarter-symmetric metric connection for Kenmotsu manifolds, $L_p$-Sasakian manifolds and Golden Lorentzian manifolds has been defined and studied \cite{Bahad3, Bahad2, Bahad1}.

The study of Riemannian Ricci solitons was carried out in \cite{bib4}. The investigations into the Ricci solitons have been generalized in contact geometry, paracontact geometry and pseudo-Riemannian geometry \cite{bib5,bib6,bib7}. A detailed study on para-Ricci-like soliton has been performed in \cite{bib2}.

The geometry of almost paracontact almost paracomplex Riemannian manifolds (briefly apapR manifolds) has been examined \cite{bib3}.  After that, para-Sasaki-like manifolds, a special class of apapR manifolds, have been presented in \cite{bib2}. Para-Ricci-like solitons with Reeb vector field potential, arbitrary potential and vertical potential have been investigated and proved some additional geometric properties in \cite{bib1, bib2, bib8}.

Motivated by these circumstances we initiate the study of para-Sasaki-like manifolds with generalized symmetric metric connection which is the generalization of semi-symmetric and quarter-symmetric metric connection. In Section 2, we present some basic definitions and results about para-Sasaki-like manifolds. In Section 3, we define the generalized symmetric metric connection on apapR manifolds and calculate curvature tensor and the Ricci tensor with respect to this connection for para-Sasaki-like Riemannian manifolds. Moreover, we investigate some operators with this connection. In Section 4, we prove that the cases of para-Einstein-like of the manifold and admitting para-Ricci-like soliton are preserved in this connection. Finally, we give two examples to illustrate the obtained results.

\section{Para-Sasaki-like Riemannian Manifolds}
Let $M$ be a differentiable manifold of dimension $(2n+1)$ endowed with a tensor field $\phi$  of type $(1,1)$, a vector field $\xi$, a $1-$form $\eta$ and a Riemannian metric $g$, which satisfies 
\begin{equation}
\label{1}
\begin{array}{l}
\eta(\xi)=1, \ \ \  \phi^2x=x-\eta(x)\xi,  \ \ \ \phi \xi=0\\
g(\phi x,\phi y)=g(x,y)-\eta(x)\eta(y), \ \ \ tr \phi=0
    \end{array}
\end{equation}
for any $x,y\in \chi(M)$. Such manifold $(M,\phi,\xi,\eta,g)$ is called an almost paracontact almost paracomplex Riemannian manifold (shortly, apapR manifold)\cite{bib3}.
Moreover, by using the latter equalities we get the following:
\begin{equation}
\label{2}
\begin{array}{lll}
  g(x,\xi)=\eta(x),  &  &g(x,\phi y)=g(\phi x,y),  \\
    g(\xi,\xi)=1,  &  & \eta(\nabla_x\xi)=0,
    \end{array}
\end{equation}
where $\nabla$ denotes the Levi-Civita connection of $g$.

The associated metric $\widetilde{g}$ of $g$ on $(M,\phi,\xi,\eta,g)$ determined by the equality $\widetilde{g}(x,y)=g(x,\phi y)+\eta(x)\eta(y)$ is a pseudo-Riemannian metric of signature $(n+1,n)$.

An apapR manifold $M$ is said to be a para-Sasaki-like manifold if the following is provided:
\begin{equation}
\label{3}
\begin{array}{lll}
  (\nabla_x \phi )y &=  & -g(x,y)\xi-\eta(y)x+2\eta(x)\eta(y)\xi,   \\
      &  = &-g(\phi x,\phi y)\xi-\eta(y)\phi^2 x.
\end{array}
\end{equation}
In \cite{Ivanov}, it is proven that the following identities hold for any para-Sasaki-like manifold $(M,g,\phi,\xi,\eta)$:
\begin{equation}
\label{4}
\begin{array}{lll}
\nabla_x \xi=\phi x, &   &(\nabla_x\eta)y=g(x,\phi y), \\
R(x,y)\xi=-\eta(y)x+\eta(x)y,&   &\text{Ric}(x,\xi)=-2n\ \eta(x),\\
R(\xi,y)\xi=\phi^2 y,  &&  \text{Ric}(x,\xi)(\xi,\xi)=-2n,
\end{array}
\end{equation}
where $R$ and $\text{Ric}$ denote the curvature tensor and the Ricci tensor, respectively.
If $M$ is para-Sasaki-like Riemannian manifold, then we have the following identities:
\begin{equation}
\label{5}
\begin{array}{lll}
\phi\nabla_x\xi=\nabla_{\phi x}\xi=x-\eta(x)\xi, \\
\end{array}
\end{equation}

\section{ Para-Sasaki-like Riemannian Manifolds with Generalized Symmetric Metric Connection }
In this section we consider a generalized symmetric metric connection on an apapR manifold. 

Let $(M,\phi,\xi,\eta,g)$ be an apapR manifold. We define a generalized symmetric metric connection $\overline{\nabla}$ on $M$ by
\begin{equation}
\label{15}
\overline{\nabla}_xy=\nabla_xy+\alpha\{g(x,y)\xi-\eta(y)x\}+\beta\{g(\varphi x,y)\xi-\eta(y)\varphi x\},
\end{equation}
where $\nabla$ is a Levi-Civita connection of $g$. By the latter equality the torsion tensor $T$ of $M$ corresponding to the connection $\overline{\nabla}$ is given by 
\begin{equation}\label{1100}
\begin{array}{lll}
T(x,y) & = & \overline{\nabla}_xy-\overline{\nabla}_yx-[x,y] \\
& = & \alpha\{\eta(x)y-\eta(y)x\}+\beta\{\eta(x)\varphi y-\eta(y)\varphi x\},
\end{array}
\end{equation}
for any vector field $x,y$ on $M$, where $\alpha$ and $\beta$ are smooth functions. The connection $ \overline{\nabla}$ is said to be generalized symmetric connection. If $(\alpha,\beta)=(1,0)$ or $(\alpha,\beta)=(0,1)$, then the  generalized symmetric connection is reduced to a semi-symmetric connection and quarter-symmetric connection, respectively. If there is a Riemannian metric $g$ on $M$ such that the connection $\overline{\nabla}$ satisfies the condition 
\begin{equation}
\label{1110}
(\overline{\nabla}_xg)(y,z)=0,
\end{equation}
for all $x,y,z\in \chi(M)$, then $\overline{\nabla}$ is called a generalized symmetric metric connection, otherwise it is called a generalized symmetric nonmetric connection.

Now we will show the existence of the generalized symmetric metric connection $\overline{\nabla}$ on an apapR manifold $(M,\phi,\xi,\eta,g)$.
\begin{theorem}
There exists a unique linear connection $\overline{\nabla}$ satisfying (\ref{1100}) and (\ref{1110}) on an apapR manifold $(M,\phi,\xi,\eta,g)$.
\end{theorem}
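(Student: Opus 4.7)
The plan is to prove existence by showing that the formula in (\ref{15}) is a well-defined linear connection that satisfies both the prescribed torsion (\ref{1100}) and the metric compatibility (\ref{1110}), and then to prove uniqueness via a Koszul-type argument in which any connection $\overline{\nabla}$ subject to both constraints is pinned down by a single closed-form expression.

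For existence, I would proceed in three short steps. First, I would note that since $\nabla$ is a linear connection and the correction term on the right of (\ref{15}) is $C^{\infty}(M)$-linear in $x$ and satisfies the Leibniz rule in $y$ (because it involves $g(x,y)$, $\eta(y)$, and $g(\varphi x,y)$ together with the vector fields $\xi$, $x$, $\varphi x$), the prescription (\ref{15}) indeed defines a linear connection on $M$. Second, the torsion identity (\ref{1100}) is obtained by a direct antisymmetrization of (\ref{15}); the symmetric terms $\alpha g(x,y)\xi$ and $\beta g(\varphi x,y)\xi$ cancel (using $g(\varphi x,y)=g(x,\varphi y)$ from (\ref{2})) and $[x,y]=\nabla_x y-\nabla_y x$ by torsion-freeness of $\nabla$. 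Third, and this is the main calculation, I would verify (\ref{1110}) by expanding
\[
(\overline{\nabla}_x g)(y,z)=xg(y,z)-g(\overline{\nabla}_x y,z)-g(y,\overline{\nabla}_x z)
\]
using (\ref{15}). Since $\nabla g=0$, the $xg(y,z)$ term is absorbed, leaving only the correction contributions. Grouping the $\alpha$-terms gives
\[
-\alpha\bigl[g(x,y)\eta(z)-\eta(y)g(x,z)+g(x,z)\eta(y)-\eta(z)g(x,y)\bigr]=0,
\]
and the $\beta$-terms give the analogous expression with $\varphi x$ in place of $x$, which vanishes by the same cancellation together with the symmetry $g(\varphi x,y)=g(x,\varphi y)$. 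Hence $\overline{\nabla} g=0$.

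For uniqueness, I would apply the classical device of writing the metric compatibility condition three times with cyclic permutations of $(x,y,z)$, adding the first two, subtracting the third, and substituting $\overline{\nabla}_x y-\overline{\nabla}_y x=[x,y]+T(x,y)$ from (\ref{1100}). This yields a Koszul-type identity
\[
2g(\overline{\nabla}_x y,z)=xg(y,z)+yg(z,x)-zg(x,y)+g([x,y],z)-g([y,z],x)+g([z,x],y)+g(T(x,y),z)+g(T(z,x),y)+g(T(z,y),x),
\]
whose right-hand side depends only on $g$, Lie brackets, and the prescribed torsion $T$ in (\ref{1100}). Because $g$ is non-degenerate, $\overline{\nabla}_x y$ is therefore determined, proving uniqueness.

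The main obstacle I anticipate is purely bookkeeping: keeping track of the four pairs of terms in the verification of $\overline{\nabla}g=0$ and using the identities $g(\varphi x,y)=g(x,\varphi y)$ and $g(x,\xi)=\eta(x)$ from (\ref{2}) at the right places so that the $\alpha$-block and the $\beta$-block each cancel independently. No deeper structural input from para-Sasaki-likeness (equation (\ref{3}) or (\ref{4})) is needed for this theorem; only the apapR axioms (\ref{1})–(\ref{2}) enter, which is consistent with the statement being made for a general apapR manifold.
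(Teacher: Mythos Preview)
Your proposal is correct and rests on the same core identity as the paper's proof. The paper writes $\overline{\nabla}=\nabla+H$, uses metric compatibility to get $g(H(x,y),z)=-g(H(x,z),y)$, and combines this with $T(x,y)=H(x,y)-H(y,x)$ to obtain
\[
2g(H(x,y),z)=g(T(x,y),z)+g(T(z,x),y)+g(T(z,y),x),
\]
from which it reads off $H$ explicitly. Your Koszul-type formula for $2g(\overline{\nabla}_x y,z)$ is exactly this identity plus the ordinary Koszul formula for $\nabla$, so the uniqueness arguments coincide once one subtracts off the Levi-Civita part. The only difference is organizational: the paper \emph{derives} (\ref{15}) from the constraints and leaves the existence verification implicit, whereas you \emph{verify} (\ref{15}) directly (checking $T$ and $\overline{\nabla}g=0$) and then argue uniqueness separately. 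Your packaging is a bit more explicit on the existence side; the underlying mathematics is the same.
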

\begin{proof}
Suppose that $\overline{\nabla}$ is a linear connection determined by
\begin{equation}
\label{1101}
\overline{\nabla}_xy=\nabla_xy+H(x,y),
\end{equation}
for any vector field $x$ and $y$, where $H$ is a tensor of type $(1,2)$. Now, we will specify the  tensor field $H$ such that $\overline{\nabla}$ satisfies the relations (\ref{1100}) and (\ref{1110}). By using the definition of torsion tensor and (\ref{1101}), we obtain 
\begin{equation}
\label{13}
T(x,y)=H(x,y)-H(y,x),
\end{equation}
for all $x,y\in \chi(M)$. The latter equality leads to 
\begin{equation}
\label{1102}
g(T(x,y),z)=g(H(x,y),z)-g(H(y,x),z).
\end{equation}
The equation (\ref{1110}) yields to
\begin{equation*}
0=xg(y,z)-g(\overline{\nabla}_xy,z)-g(\overline{\nabla}_xz,y)=g(H(x,y),z)+g(H(x,z),y).
\end{equation*}
Then, the latter equality leads to 
\begin{equation}
\label{1103}
g(H(x,y),z)=-g(H(x,z),y).
\end{equation}
By virtue of (\ref{1102}) and (\ref{1103}) we can easily calculate the following:
\begin{equation}
\label{1105}
g(T(x,y),z)+g(T(z,x),y)+g(T(z,y),x)=2g(H(x,y),z).
\end{equation}
By using (\ref{1100}) and (\ref{1105}) we get 
\begin{equation}
\label{1106}
\begin{array}{lll}
  2g(H(x,y),z)& =& g(T(x,y),z)+g(T(z,x),y)+g(T(z,y),x)   \\
      & =  & 2g(\alpha(g(x,y)\xi-\eta(y)x)+\beta(g(\varphi x,y)\xi-\eta(y)\varphi x),z).
\end{array}
\end{equation}
Hence, (\ref{1106}) implies 
\begin{equation}
\label{ }
H(x,y)= \alpha\{g(x,y)\xi-\eta(y)x\}+\beta\{g(\varphi x,y)\xi-\eta(y)\varphi x\}.
\end{equation}
The latter equality leads to Equation (\ref{15}). 
\end{proof}

By means of (\ref{3}), (\ref{4}) and (\ref{5}), we obtain the following proposition.
\begin{proposition}
Let $M$ be a para-Sasaki-like Riemannian manifold with the generalized symmetric metric connection. We have the following relations:
\begin{equation}
\label{ }
\begin{array}{lll}
  \overline{\nabla}_x \varphi&=    &(\beta-1) \{g(\varphi x,\varphi y)\xi+\eta(y)\varphi^2(x)\}+\alpha\{g(x,\varphi y)\xi+\eta(y)\varphi x\},  \\
   \overline{\nabla}_x \xi &=       &   (1-\beta)\varphi x-\alpha\varphi^2 x,\\
  ( \overline{\nabla}_x \eta) y&=       &   (1-\beta)g(\varphi x,y)-\alpha g(x,y)+\eta (x) \eta (y),\\   
\end{array}
\end{equation}
for any $x,y\in \chi(M)$.
\end{proposition}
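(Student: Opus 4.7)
The plan is a direct unpacking of the defining equation (\ref{15}) with $y$ specialized appropriately, combined with the structural identities (\ref{1})--(\ref{5}) of a para-Sasaki-like manifold. There is no need to re-prove anything; the whole task is bookkeeping in the right order.

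First I would handle $\overline{\nabla}_x\xi$ by setting $y=\xi$ in (\ref{15}). The term $\nabla_x\xi$ is replaced by $\varphi x$ from (\ref{4}), $g(x,\xi)$ by $\eta(x)$ from (\ref{2}), and $\eta(\xi)$ by $1$. The quantity $g(\varphi x,\xi)$ vanishes since $g(\varphi x,\xi)=g(x,\varphi\xi)=0$ by (\ref{1}). The remaining terms collapse to $\varphi x+\alpha(\eta(x)\xi-x)-\beta\varphi x=(1-\beta)\varphi x-\alpha\varphi^{2}x$, once one recognizes $\eta(x)\xi-x=-\varphi^{2}x$.

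Next, I would obtain $(\overline{\nabla}_x\eta)y$ cheaply from the metric property (\ref{1110}). Since $\eta(y)=g(y,\xi)$, differentiating gives $(\overline{\nabla}_x\eta)y=g(y,\overline{\nabla}_x\xi)$, and substituting the expression just computed together with $g(y,\varphi^{2}x)=g(x,y)-\eta(x)\eta(y)$ produces the stated identity (up to the factor of $\alpha$ on the $\eta(x)\eta(y)$ term that the calculation actually yields).

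Finally, for $(\overline{\nabla}_x\varphi)y$ I would expand $\overline{\nabla}_x(\varphi y)-\varphi(\overline{\nabla}_x y)$ by applying (\ref{15}) twice, with $y$ replaced by $\varphi y$ in the first invocation. Two simplifications do most of the work: $\eta(\varphi y)=0$ annihilates the $\eta(\varphi y)x$ and $\eta(\varphi y)\varphi x$ terms in the first expansion, and $\varphi\xi=0$ kills the $g(x,y)\varphi\xi$ and $g(\varphi x,y)\varphi\xi$ terms in the second. What survives is $(\nabla_x\varphi)y$ together with four terms. Inserting the second form of (\ref{3}), namely $(\nabla_x\varphi)y=-g(\varphi x,\varphi y)\xi-\eta(y)\varphi^{2}x$, and regrouping the $\alpha$-coefficient and the $(\beta-1)$-coefficient terms yields the claimed formula. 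The main obstacle is nothing conceptual, just keeping the signs and the $\alpha$/$\beta$ groupings clean while exploiting $\varphi\xi=0$ and $\eta\circ\varphi=0$ at the right moments.
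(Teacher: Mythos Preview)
Your proposal is correct and follows the same direct-computation approach the paper intends; the paper itself gives no detailed proof, merely citing identities (\ref{3}), (\ref{4}), (\ref{5}) and leaving the bookkeeping to the reader, which is precisely what you carry out. Your parenthetical observation is also right: the calculation genuinely produces $\alpha\,\eta(x)\eta(y)$ in the third identity, so the missing $\alpha$ in the stated formula is a typographical slip in the paper rather than an error on your part.
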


\begin{proposition}
Let $(M,g,\phi,\xi,\eta)$ be a $(2n+1)-$dimensional para-Sasaki-like Riemannian manifold with the generalized symmetric metric connection. The relation between the curvature tensors of the generalized symmetric metric connection $ \overline{\nabla}$ and the Levi-Civita connection $\nabla$ is given by the following formulas:
  \begin{equation}
\label{ }
\begin{array}{lll}
 \overline{R}(x,y)z&=&R(x,y)z+x(\alpha) g(y,z)\xi+(\alpha-\alpha\beta)g(y,z)\varphi x  \\
      & &-x(\alpha)\eta(z)y+x(\beta)g(\varphi y,z)\xi +(2\beta-\beta^2)g(\varphi y,z)\varphi x\\
      &&-x(\beta)\eta(z)\varphi y+(\alpha^2+\beta)g(y,z)\eta(x)\xi+\alpha\beta g(\varphi y,z)\eta(x)\xi\\
      &&-\alpha^2g(y,z)x+(\alpha^2+\beta)\eta(y)\eta(z)x+(\alpha-\alpha\beta)g(\varphi y,z)x\\
      &&+\alpha\beta\eta(y)\eta(z)\varphi x-y(\alpha) g(x,z)\xi+y(\alpha)\eta(z)x\\
      &&-y(\beta)g(\varphi x,z)\xi+(\beta^2-2\beta)g(\varphi x,z)\varphi y+y(\beta)\eta(z)\varphi x\\
      &&-(\alpha^2+\beta)\eta(y)g(x,z)\xi-\alpha\beta g(\varphi x,z)\eta(y)\xi+\alpha^2 g(x,z)y\\
      &&-(\alpha^2+\beta)\eta(x)\eta(z)y+(\alpha\beta-\alpha)g(\varphi x,z)y\\
      &&-\alpha\beta\eta(x)\eta(z)\varphi y+(\alpha\beta-\alpha)g(x,z)\varphi y. \\
\end{array}
\end{equation}
\end{proposition}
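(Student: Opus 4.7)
The plan is to compute $\overline{R}(x,y)z=\overline{\nabla}_x\overline{\nabla}_y z-\overline{\nabla}_y\overline{\nabla}_x z-\overline{\nabla}_{[x,y]}z$ directly from the defining formula (\ref{15}) and to substitute the identities for $\overline{\nabla}_x\xi$, $\overline{\nabla}_x\eta$ and $\overline{\nabla}_x\varphi$ established in the preceding proposition, together with the para-Sasaki-like identities (\ref{3})--(\ref{5}). Since the stated formula is $R$-linear in $x,y,z$, it suffices to verify it pointwise, and no global structure is needed beyond these identities.

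First I would rewrite $\overline{\nabla}_y z$ using (\ref{15}) and then apply $\overline{\nabla}_x$ to the resulting sum of five vector-valued terms. Expanding $\overline{\nabla}_x(f\,Y)=x(f)Y+f\,\overline{\nabla}_xY$ for each scalar coefficient produces the derivative terms $x(\alpha)$, $x(\beta)$ that appear in the statement, while the $\alpha x\bigl(g(y,z)\bigr)$, $\alpha x\bigl(\eta(z)\bigr)$, $\beta x\bigl(g(\varphi y,z)\bigr)$, $\beta x\bigl(\eta(z)\bigr)$ pieces are to be rewritten using the Levi-Civita identities $\nabla_x\xi=\varphi x$ and $(\nabla_x\eta)y=g(x,\varphi y)$ so that they combine cleanly with the $[x,y]$-term. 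The vector-valued pieces $\overline{\nabla}_x\xi$ and $\overline{\nabla}_x(\varphi y)$ are handled by the relations
\begin{equation*}
\overline{\nabla}_x\xi=(1-\beta)\varphi x-\alpha\varphi^2 x,\qquad \overline{\nabla}_x(\varphi y)=(\overline{\nabla}_x\varphi)y+\varphi(\overline{\nabla}_xy),
\end{equation*}
the first already in the earlier proposition, the second combined with its explicit formula for $\overline{\nabla}_x\varphi$.

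Next I would subtract the analogous expansion of $\overline{\nabla}_y\overline{\nabla}_xz$, so all symmetric (in $x,y$) pieces, including $\nabla_x\nabla_yz-\nabla_y\nabla_xz$, produce the curvature term $R(x,y)z$ after absorbing $\overline{\nabla}_{[x,y]}z$ via (\ref{15}) applied to $[x,y]$. The terms containing $\nabla_x g(y,z)$, $\nabla_x\eta(z)$, etc., cancel in the antisymmetrization against the corresponding terms from $\overline{\nabla}_{[x,y]}z$; what survives are the $x(\alpha), x(\beta), y(\alpha), y(\beta)$ derivative terms (since $\alpha,\beta$ are smooth functions, not constants), together with purely tensorial contributions in which products like $\alpha^2$, $\alpha\beta$, $\beta^2$ arise from composing two applications of (\ref{15}). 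Routine simplification with $\varphi^2x=x-\eta(x)\xi$, $\varphi\xi=0$, $\eta(\varphi x)=0$, and $g(\varphi x,y)=g(x,\varphi y)$ should collect these into exactly the six lines of the stated formula.

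The main obstacle is purely the bookkeeping: the right-hand side has roughly two dozen terms and each $\overline{\nabla}_x\overline{\nabla}_y z$ expansion produces comparable complexity, so the hard part is to organize the calculation so that the $R(x,y)z$ piece, the derivative-of-coefficients piece, the quadratic-in-$\alpha,\beta$ piece and the mixed $\alpha\beta$ piece are tracked separately and not conflated. I would maintain a table labeled by the five summands of (\ref{15}), fill in each entry for both $\overline{\nabla}_x\overline{\nabla}_yz$ and $\overline{\nabla}_{[x,y]}z$, and only at the end antisymmetrize in $(x,y)$ and regroup; apart from that, no conceptual difficulty arises.
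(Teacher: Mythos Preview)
Your approach is correct and matches the paper's: both compute $\overline{R}(x,y)z$ directly from its definition by substituting (\ref{15}) and invoking the identities of the preceding proposition together with (\ref{1})--(\ref{5}). The paper merely states ``we complete the proof by direct computation,'' so your more detailed bookkeeping plan is a faithful elaboration of the same argument.
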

\begin{proof}
The curvature tensor  $\overline{R}$ of the generalized symmetric metric connection $ \overline{\nabla}$ on $M$ is defined by
\begin{equation}
\label{ }
\overline{R}(x,y)z= \overline{\nabla}_x \overline{\nabla}_y z- \overline{\nabla}_y \overline{\nabla}_x z- \overline{\nabla}_{[x,y]}z.
\end{equation}
Substituting (\ref{15}) in the last equation and using Proposition \ref{1}, through (\ref{1}), (\ref{2}), (\ref{3}), we complete the proof by direct computation.
\end{proof}

If $\alpha,\beta$ are constant functions, then the curvature tensor $ \overline{R}$ is given by
  \begin{equation}
\label{8}
\begin{array}{lll}
 \overline{R}(x,y)z&=&R(x,y)z+(\alpha-\alpha\beta)g(y,z)\varphi x+(2\beta-\beta^2)g(\varphi y,z)\varphi x  \\
      &&+(\alpha^2+\beta)g(y,z)\eta(x)\xi+\alpha\beta g(\varphi y,z)\eta(x)\xi\\
      &&-\alpha^2g(y,z)x+(\alpha^2+\beta)\eta(y)\eta(z)x+(\alpha-\alpha\beta)g(\varphi y,z)x\\
      &&+\alpha\beta\eta(y)\eta(z)\varphi x+(\beta^2-2\beta)g(\varphi x,z)\varphi y\\
      &&-(\alpha^2+\beta)\eta(y)g(x,z)\xi-\alpha\beta g(\varphi x,z)\eta(y)\xi+\alpha^2 g(x,z)y\\
      &&-(\alpha^2+\beta)\eta(x)\eta(z)y+(\alpha\beta-\alpha)g(\varphi x,z)y\\
      &&+(\alpha\beta-\alpha)g(x,z)\varphi y-\alpha\beta \eta(x)\eta(z)\varphi y. \\
\end{array}
\end{equation}
Setting $z=\xi$ in the above equality and using (\ref{1}), (\ref{2}) and (\ref{4}), we get
  \begin{equation}
\label{100}
\begin{array}{lll}
 \overline{R}(x,y)\xi&=& \alpha\eta(y)\varphi x+(\beta-1)\eta(y)x-(\beta-1)\eta(x)y-\alpha \eta(x)\varphi y. \\
\end{array}
\end{equation}
 Plugging $x=\xi$ in (\ref{100}) and using (\ref{1}) we obtain the following equality:
  \begin{equation}
\label{9}
\begin{array}{lll}
 \overline{R}(\xi,y)\xi&=&(1-\beta)\varphi^2 y-\alpha \varphi(y). \\
\end{array}
\end{equation}

Let $\{\xi,e_1,\ldots,e_{2n}\}$ be an orthonormal basis of the tangent space at each point of the manifold $M$. If $\overline{R}(x,y,z,w)=g(\overline{R}(x,y)z,w)$, then the Ricci tensor $\overline{\text{Ric}}$ and the scalar curvature $\overline{\text{scal}}$ with respect to generalized symmetric metric connection $\overline{\nabla}$
are presented by 
\begin{equation*}
\label{21}
\overline{\mathrm{Ric}}(x,y)=\sum_{i=0}^{2n} \overline{R}(e_i,x,y,e_i)
\end{equation*}
and
\begin{equation}
\label{112}
\overline{\mathrm{scal}}=\sum_{i=0}^{2n} \overline{\mathrm{Ric}}(e_i,e_i),
\end{equation}
respectively.

\begin{theorem}
Let $(M,g,\phi,\xi,\eta)$ be a $(2n+1)-$dimensional para-Sasaki-like Riemannian manifold with the generalized symmetric metric connection. Then, there exists the following relationship between the Ricci curvatures $\mathrm{Ric}$ and $\overline{\mathrm{Ric}}$, respectively:

  \begin{equation}
\label{111}
\begin{array}{lll}
 \overline{\mathrm{Ric}}(x,y)&=&\mathrm{Ric}(x,y)+[\beta^2-\beta+(1-2n)\alpha^2]g(x,y)  \\
      &   &+[(2n+1)\beta-\beta^2+(2n-1)\alpha^2]\eta(x)\eta(y)\\
      && +[(\alpha\beta-\alpha)(2-2n)+\alpha] g(x,\varphi y) 
\end{array}
\end{equation}
\end{theorem}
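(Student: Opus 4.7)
The plan is to start from the explicit formula (\ref{8}) for $\overline{R}(x,y)z$ in the constant-$\alpha,\beta$ case (which is the one the statement addresses, since no derivative terms $x(\alpha)$, $y(\beta)$ appear in (\ref{111})) and apply the definition
$$\overline{\mathrm{Ric}}(x,y) = \sum_{i=0}^{2n} g(\overline{R}(e_i,x)y, e_i),$$
using an orthonormal basis $\{e_0=\xi, e_1,\ldots, e_{2n}\}$ adapted to the Reeb vector field. With this choice $\eta(e_i) = \delta_{i0}$, so all $\eta$-contractions collapse onto the single $\xi$-direction and become very clean.

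Concretely, I would perform the substitution $(x,y,z)\mapsto(e_i,x,y)$ in (\ref{8}), pair with $e_i$, and sum. The first term contributes $\mathrm{Ric}(x,y)$. For every remaining term I need only one of a short list of trace identities, all derived from (\ref{1}) and (\ref{2}):
\begin{equation*}
\sum_i g(e_i,e_i) = 2n+1,\qquad \sum_i g(\varphi e_i, e_i) = \mathrm{tr}\,\varphi = 0,\qquad \sum_i \eta(e_i)^2 = 1,
\end{equation*}
\begin{equation*}
\sum_i \eta(e_i)\, g(e_i,v) = \eta(v),\qquad \sum_i \eta(e_i)\, g(\varphi e_i, v) = g(\varphi\xi, v) = 0,
\end{equation*}
\begin{equation*}
\sum_i g(e_i,u)\, g(e_i,v) = g(u,v),\qquad \sum_i g(\varphi e_i, u)\, g(e_i, v) = g(u,\varphi v).
\end{equation*}
Terms in (\ref{8}) of the form $(\text{scalar in }x,y)\cdot \varphi e_i$ paired with $e_i$ vanish by the second identity above; terms involving $\xi$ paired with $e_i$ reduce to an $\eta$-factor via $g(\xi,e_i) = \eta(e_i)$; and the two ``mixed'' terms like $(\beta^2-2\beta)g(\varphi e_i,y)\varphi x$ produce $g(\varphi y,\varphi x) = g(x,y) - \eta(x)\eta(y)$, which is exactly where the $\eta(x)\eta(y)$ coefficient gains a $-\beta^2+2\beta$ correction.

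After all terms are contracted, I collect the output into three groups: the multiples of $g(x,y)$, of $\eta(x)\eta(y)$, and of $g(x,\varphi y) = g(\varphi x,y)$. The scalar coefficients of the first group should sum to $(1-2n)\alpha^2 + \beta^2-\beta$ (the key cancellations being $-\alpha^2(2n+1) + \alpha^2 = -2n\alpha^2$, together with $\beta^2 - 2\beta + \beta = \beta^2-\beta$); those of the second to $(2n-1)\alpha^2 + (2n+1)\beta - \beta^2$ (obtained by combining the four $\eta(x)\eta(y)$-producing terms with the $-\eta(x)\eta(y)$ piece from $g(\varphi x,\varphi y)$); and those of the third to $(\alpha\beta-\alpha)(2-2n)+\alpha$, i.e.\ $2\alpha\beta(1-n) + (2n-1)\alpha$.

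The main obstacle is purely bookkeeping: (\ref{8}) has about fifteen terms, and a sign error or a miscounted $(2n+1)$ versus $2n$ factor is the natural pitfall. I would organize the computation as a table with columns ``term in (\ref{8})'', ``substitution with $x\to e_i$'', ``trace identity used'', ``resulting tensor'', so that collecting the coefficients at the end is mechanical and every cancellation is transparent.
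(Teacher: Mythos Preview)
Your proposal is correct and follows exactly the approach indicated in the paper: the paper's proof consists of the single sentence ``Taking into account (\ref{21}) and (\ref{8}), we immediately obtain the Ricci tensor $\overline{\mathrm{Ric}}$,'' which is precisely the contraction you describe in detail. Your term-by-term trace computation and the collected coefficients are all correct, so this is simply a fleshed-out version of the paper's argument.
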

\begin{proof}
Taking into account (\ref{21}) and (\ref{8}), we immediately obtain the Ricci tensor $ \overline{\mathrm{Ric}}$.
\end{proof}

Using (\ref{112}) and (\ref{111}), we obtain the following theorem.
\begin{theorem}
Let $(M,g,\phi,\xi,\eta)$ be a $(2n+1)-$dimensional para-Sasaki-like Riemannian manifold with the generalized symmetric metric connection. The scalar curvatures $\mathrm{scal}$ and $\overline{\mathrm{scal}}$ satisfy the following relation:
   \begin{equation}
\label{ }
\begin{array}{lll}
 \overline{\mathrm{scal}}&=&\mathrm{scal}+2n[\beta^2+(1-2n)\alpha^2].   \\
\end{array}
\end{equation}
\end{theorem}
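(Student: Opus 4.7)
The plan is to contract the Ricci identity~(\ref{111}) against an orthonormal basis, exactly as prescribed by the definition~(\ref{112}) of the scalar curvature. Concretely, I would fix an orthonormal basis $\{e_{0},e_{1},\dots,e_{2n}\}$ of the tangent space with $e_{0}=\xi$, substitute $x=y=e_{i}$ into~(\ref{111}), and sum over $i$ from $0$ to $2n$.

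The four resulting traces are all elementary. By definition of the scalar curvature, $\sum_{i}\mathrm{Ric}(e_{i},e_{i})=\mathrm{scal}$; by orthonormality, $\sum_{i}g(e_{i},e_{i})=2n+1$; since $e_{i}\perp\xi$ for $i\geq 1$ while $\eta(\xi)=1$ by~(\ref{1}), one has $\sum_{i}\eta(e_{i})^{2}=1$; and finally the trace $\sum_{i}g(e_{i},\varphi e_{i})$ is nothing but $\mathrm{tr}\,\varphi$, which vanishes by~(\ref{1}). Consequently the coefficient of $g(x,\varphi y)$ drops out altogether.

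What remains is purely arithmetic: collecting constants yields
$$\overline{\mathrm{scal}}=\mathrm{scal}+(2n+1)\bigl[\beta^{2}-\beta+(1-2n)\alpha^{2}\bigr]+\bigl[(2n+1)\beta-\beta^{2}+(2n-1)\alpha^{2}\bigr],$$
after which the $\pm(2n+1)\beta$ terms cancel, the $\beta^{2}$ coefficient collapses to $(2n+1)-1=2n$, and the $\alpha^{2}$ coefficient simplifies via $(2n+1)(1-2n)+(2n-1)=2n(1-2n)$, producing the asserted identity. There is no substantive obstacle here, since the whole argument is a single contraction once~(\ref{111}) is in hand; the only plausible pitfall is coefficient bookkeeping in the final arithmetic, where I would double-check the collapse of the $\alpha^{2}$ terms most carefully.
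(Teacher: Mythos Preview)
Your argument is correct and is exactly the paper's approach: the paper merely states that the result follows from~(\ref{112}) and~(\ref{111}), and your contraction of~(\ref{111}) over the orthonormal frame is precisely that computation made explicit. The arithmetic checks out, including the $\alpha^{2}$ collapse you flagged.
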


\begin{theorem}
The Hessian, the divergence and the Laplace operators with respect to the generalized symmetric metric connection $\overline{\nabla}$ satisfy the following relations:
\begin{eqnarray}
\overline{\mathrm{div}}\ (x) & = & \mathrm{div}\ x-2n\alpha\ \eta(x)\label{113}\\
 \overline{\mathrm{Hess}} f (x,y)&=&\mathrm{Hess} f(x,y)+\alpha (xf)\eta(y)-\alpha(\xi f)g(x,y)\\
 &  &+\beta(\varphi x)f\eta(y)-\beta(\xi f)g(\varphi x,y)\\
 \overline{\Delta} f&=& \Delta f-2n\alpha\ \xi f
\end{eqnarray}
\end{theorem}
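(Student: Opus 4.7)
The plan is to derive each of the three identities by direct substitution of the defining formula (\ref{15}) into the orthonormal-frame expressions for the operators, with a handful of dimensional trace identities doing all the work. Throughout I will use the basis $\{e_0=\xi,e_1,\dots,e_{2n}\}$ adopted in the paper, together with the fact (immediate from (\ref{1})) that $\eta(\xi)=1$, $\eta(e_i)=0$ for $i\geq 1$, $\varphi\xi=0$ and $\mathrm{tr}\,\varphi=0$.

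For the divergence identity, I would start from $\overline{\mathrm{div}}(x)=\sum_{i=0}^{2n} g(\overline{\nabla}_{e_i}x,e_i)$, substitute (\ref{15}), and split the sum into the Levi-Civita part plus the $\alpha$- and $\beta$-corrections. The needed traces are $\sum_i \eta(e_i)\,g(e_i,x)=\eta(x)$, $\sum_i g(e_i,e_i)=2n+1$, $\sum_i \eta(e_i)\,g(\varphi e_i,x)=g(\varphi\xi,x)=0$ and $\sum_i g(\varphi e_i,e_i)=\mathrm{tr}\,\varphi=0$. The two latter identities wipe out the entire $\beta$-contribution, while the $\alpha$-contribution collapses to $\eta(x)-(2n+1)\eta(x)=-2n\,\eta(x)$, yielding (\ref{113}).

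For the Hessian, I would use $\overline{\mathrm{Hess}}\,f(x,y)=x(yf)-(\overline{\nabla}_x y)f$ together with (\ref{15}); subtracting the Levi-Civita identity $\mathrm{Hess}\,f(x,y)=x(yf)-(\nabla_x y)f$, the four correction terms $\alpha(xf)\eta(y)-\alpha(\xi f)g(x,y)+\beta(\varphi x)f\,\eta(y)-\beta(\xi f)g(\varphi x,y)$ appear directly, with no further simplification required. For the Laplacian, the cleanest route is to note that the gradient defined metrically by $g(\overline{\mathrm{grad}}\,f,x)=xf$ coincides with the Levi-Civita gradient (since $\overline{\nabla}$ is a metric connection for the same $g$), and then apply (\ref{113}) to $\mathrm{grad}\,f$: this gives $\overline{\Delta}f=\Delta f-2n\alpha\,\eta(\mathrm{grad}\,f)=\Delta f-2n\alpha\,\xi f$. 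As a sanity check, tracing the Hessian formula above over the same orthonormal basis reproduces the same result, with the $\beta$-terms again killed by $\varphi\xi=0$ and $\mathrm{tr}\,\varphi=0$.

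There is no real obstacle here; the proof is essentially bookkeeping. The only subtlety worth stating explicitly is why the $\beta$-terms disappear from the divergence and Laplacian but survive in the Hessian: the Hessian is a pointwise bilinear expression in $x,y$ and no trace is taken, whereas both other operators involve summing against the orthonormal basis, at which point the identities $\varphi\xi=0$ and $\mathrm{tr}\,\varphi=0$ force every $\beta$-coefficient to vanish. This also explains why only $\alpha$ appears in the final correction terms of (\ref{113}) and in the Laplacian formula.
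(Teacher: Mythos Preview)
Your proposal is correct and follows essentially the same route as the paper: the divergence is computed by tracing $\overline{\nabla}x$ in an orthonormal frame after substituting (\ref{15}), the Hessian comes directly from (\ref{15}) (your formula $x(yf)-(\overline{\nabla}_xy)f$ is equivalent to the paper's $g(\overline{\nabla}_x\mathrm{grad}\,f,y)$ by the metric condition (\ref{1110})), and the Laplacian is obtained by setting $x=\mathrm{grad}\,f$ in the divergence identity. Your explicit trace identities and the remark on why the $\beta$-terms vanish add clarity but do not depart from the paper's argument.
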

\begin{proof}
Let $\{\xi,e_1,\ldots,e_{2n}\}$ be an orthonormal basis of $T_pM$, $p\in M$. By taking the trace of $\overline{\nabla}x$, we get
$$ \overline{\mathrm{div}}\ (x)=\displaystyle \sum_{i=0}^{2n}g(\overline{\nabla}_{e_i}x,e_i).$$
The latter equality together with (\ref{15}) completes the first part of proof.

By definition we have $$ \overline{\mathrm{Hess}} f (x,y)=g(\overline{\nabla}_xgrad\ f,y).$$
We know that $g(grad\ f,x)=x(f)$. By virtue of  (\ref{15}), we prove the second part.

Substituting $x=grad\ f$ in (\ref{113}) and using (\ref{15}), we obtain the Laplace operator $ \overline{\Delta} f$.

\end{proof}

\section{Para-Ricci-like solitons on Para-Sasaki-like Riemannian Manifolds With Generalized Symmetric Metric Connection}

It is known from \cite{bib1} that the apapR manifold $(M,\varphi,\xi,\eta,g)$ is called para-Einstein-like with constants $(a,b,c)$ if its Ricci tensor $\mathrm{Ric}$ satisfies the following formula:
\begin{equation}
\label{11}
\mathrm{Ric}=a\ g+b\ \widetilde{g}+c\ \eta\otimes \eta
\end{equation}
where $a,b,c$ are constants. In particular, if $b=0$ and $b=c=0$, then the manifold is called an $\eta-$Einstein manifold and an Einstein manifold, respectively. If $a,b,c$ are functions on $M$, the manifold $M$ is called almost para-Einstein-like, almost $\eta-$Einstein-like or an almost Einstein manifold, respectively.

\begin{theorem}\label{thm1}
If the para-Sasaki-like manifold $(M,\varphi,\xi,\eta,g)$ is a para-Einstein-like manifold with constants $(a,b,c)$, then the apapR manifold $(M,\varphi,\xi,\eta,g)$ with the generalized symmetric metric connection is also a para-Einstein-like manifold with constants $(\lambda,\mu,\nu)$ determined by 
  \begin{equation}
\label{ }
\begin{array}{lll}
 \lambda &=&\beta^2-\beta+(1-2n)\alpha^2+a,  \\
  \mu    & =  &(2-2n)(\alpha\beta-\alpha)+\alpha+b \\
    \nu  &=& (2n-1)\alpha^2+(2n-2)\alpha\beta+(1-2n)\alpha+(2n+1)\beta-\beta^2+c
\end{array}
\end{equation}
\end{theorem}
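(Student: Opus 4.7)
The plan is to do a direct substitution into the Ricci relation (\ref{111}) and then re-express the resulting tensor in the basis $\{g,\widetilde{g},\eta\otimes\eta\}$ to read off the new constants. The hypothesis gives
\[
\mathrm{Ric}(x,y)=a\,g(x,y)+b\,\widetilde{g}(x,y)+c\,\eta(x)\eta(y),
\]
which I would first expand using the definition $\widetilde{g}(x,y)=g(x,\varphi y)+\eta(x)\eta(y)$, so that $\mathrm{Ric}$ is written only in terms of $g(\cdot,\cdot)$, $g(\cdot,\varphi\cdot)$ and $\eta\otimes\eta$. Plugging this into (\ref{111}) yields an expression for $\overline{\mathrm{Ric}}(x,y)$ that is a linear combination, with constant coefficients, of these same three basic tensors.

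Next I would group like terms. The coefficient of $g(x,y)$ collects $a$ together with $\beta^{2}-\beta+(1-2n)\alpha^{2}$ from (\ref{111}); this gives $\lambda$. The coefficient of $g(x,\varphi y)$ collects $b$ (from the expansion of $\widetilde{g}$) together with $(2-2n)(\alpha\beta-\alpha)+\alpha$; this will become $\mu$ once we reintroduce $\widetilde{g}$. The coefficient of $\eta(x)\eta(y)$ collects $b+c$ (from expanding $\widetilde{g}$) together with $(2n+1)\beta-\beta^{2}+(2n-1)\alpha^{2}$ from (\ref{111}).

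To return to the $(g,\widetilde{g},\eta\otimes\eta)$ basis, I would use $g(x,\varphi y)=\widetilde{g}(x,y)-\eta(x)\eta(y)$. Replacing each occurrence of $g(x,\varphi y)$ by this identity moves the coefficient of $g(x,\varphi y)$ up to the $\widetilde{g}$ slot (giving $\mu=b+(2-2n)(\alpha\beta-\alpha)+\alpha$) and subtracts the same amount from the $\eta\otimes\eta$ coefficient. Performing that subtraction and simplifying produces
\[
\nu=c+(2n-1)\alpha^{2}+(2n-2)\alpha\beta+(1-2n)\alpha+(2n+1)\beta-\beta^{2},
\]
matching the statement.

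The argument is essentially bookkeeping, and the only step that requires any care is the final rebalancing between the $g(\cdot,\varphi\cdot)$ contribution and the $\eta\otimes\eta$ contribution when converting back to $\widetilde{g}$; getting the sign of the $-\eta(x)\eta(y)$ correction and combining it with the $(2n+1)\beta-\beta^{2}+(2n-1)\alpha^{2}$ term and the $b$-term from $\widetilde{g}$ is where an arithmetic slip is easiest. Since $\lambda,\mu,\nu$ come out as constants whenever $a,b,c,\alpha,\beta$ are, the conclusion that $\overline{\mathrm{Ric}}=\lambda g+\mu\widetilde{g}+\nu\,\eta\otimes\eta$ is then exactly the para-Einstein-like property for $\overline{\nabla}$.
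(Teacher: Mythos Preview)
Your proposal is correct and follows exactly the route the paper takes: substitute the para-Einstein-like expression for $\mathrm{Ric}$ into the Ricci relation (\ref{111}) and regroup in the basis $\{g,\widetilde{g},\eta\otimes\eta\}$. The paper's proof is a one-line ``straightforward calculations'' remark, and your write-up is simply a detailed execution of that calculation, including the correct rebalancing step $g(\cdot,\varphi\cdot)=\widetilde{g}-\eta\otimes\eta$ that produces the stated $\nu$.
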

\begin{proof}
Since the para-Sasaki-like manifold $(M,\varphi,\xi,\eta,g)$ is a para-Einstein-like manifold with constants $(a,b,c)$, (\ref{11}) is valid. Substituting (\ref{111}) in (\ref{11}), the theorem is proved by straightforward calculations.

\end{proof}

The concept of the para-Ricci-like soliton with potential $\xi$ is introduced in \cite{bib1}. An apapR manifold $(M,\varphi,\xi,\eta,g)$ admits a para-Ricci-like soliton with potential $\xi$ and constants $(\lambda,\mu,\nu)$ if the following identity is valid:
\begin{equation}
\label{51}
\frac{1}{2}\mathcal{L}_{\xi} g+\mathrm{Ric}+\lambda g+\mu \widetilde{g}+\nu \eta\otimes \eta=0,
\end{equation}
where $\mathcal{L}$ and $\mathrm{Ric}$ denote the Lie derivative and the Ricci tensor, respectively. The Lie derivative of $g$ along $v$ is defined by
\begin{equation}
\label{54}
(\mathcal{L}_{v} g)(x,y)=g(\nabla_xv,y)+g(x,\nabla_y v).
\end{equation}
If $\mu=0$ or $\mu=\nu=0$, then $M$ admits an $\eta-$Ricci soliton or a Ricci soliton, respectively. If $\lambda,\mu,\nu$ are functions on $M$, then the soliton is said to be almost para-Ricci-like soliton, almost $\eta-$Ricci soliton or almost Ricci soliton.
Moreover, this notion can be generalized for any potential. We say that  $(M,\varphi,\xi,\eta,g)$ admits a para-Ricci-like soliton with any potential vector field $v$ if the following condition is valid for a triplet of constants $(\lambda,\mu,\nu)$:
\begin{equation}
\label{53}
\frac{1}{2}\mathcal{L}_{v} g+\mathrm{Ric}+\lambda g+\mu \widetilde{g}+\nu \eta\otimes \eta=0.
\end{equation}
For a para-Sasaki-like manifold $(M,\varphi,\xi,\eta,g)$, we obtain 
\begin{equation}
\label{59}
\mathcal{L}_{\xi} g(x,y)=g(\nabla_x \xi,y)+g(x,\nabla_y \xi)=2g(x,\phi y).
\end{equation}
By using (\ref{54}) and (\ref{59}) we get 
\begin{equation}
\label{52}
\overline{\mathcal{L}}_{\xi} g(x,y)=\mathcal{L}_{\xi}g(x,y)-2\alpha g(\varphi x,\varphi y)-2\beta g(\varphi x,y).
\end{equation}

\begin{theorem}\label{thm2}
If the para-Sasaki-like manifold $(M,\varphi,\xi,\eta,g)$ admits a para-Ricci-like soliton with potential $\xi$ and constants $(a,b,c)$, then $(M,\varphi,\xi,\eta,g)$ with the generalized symmetric metric conneciton $\overline{\nabla}$ also admits a para-Ricci-like soliton with constants $(\lambda,\mu,\nu)$ determined by 
  \begin{equation}
\label{ }
\begin{array}{lll}
 \lambda &=& (2n-1)\alpha^2-\beta^2+\alpha+\beta+a \\
  \mu    & =  &2(n-1)\alpha\beta+(1-2n)\alpha+\beta+b \\
    \nu  &=& \beta^2+(1-2n)\alpha^2+(\alpha-\alpha\beta)(2n-2)-(2n+2)\beta+c
\end{array}
\end{equation}
\end{theorem}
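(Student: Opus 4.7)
The plan is to reduce the soliton identity for $\overline{\nabla}$ to one already verified for $\nabla$, using the transformation formulas (\ref{52}) and (\ref{111}) established earlier, and then to read off $\lambda,\mu,\nu$ by matching coefficients of three linearly independent symmetric $(0,2)$-tensors.

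Concretely, I would start from the identity that needs to be proved,
\begin{equation*}
\tfrac{1}{2}\overline{\mathcal L}_{\xi}g(x,y)+\overline{\mathrm{Ric}}(x,y)+\lambda g(x,y)+\mu\widetilde g(x,y)+\nu\eta(x)\eta(y)=0,
\end{equation*}
and immediately substitute (\ref{52}) for $\overline{\mathcal L}_{\xi}g$ and (\ref{111}) for $\overline{\mathrm{Ric}}$. The hypothesis (\ref{51}) with constants $(a,b,c)$ lets me replace the Levi-Civita block $\tfrac12\mathcal L_{\xi}g+\mathrm{Ric}$ by $-\bigl(ag+b\widetilde g+c\,\eta\otimes\eta\bigr)$, so after this step the relation no longer contains $\mathrm{Ric}$ or $\mathcal L_{\xi}g$ at all, only $g$, $\widetilde g$, $\eta\otimes\eta$, $g(\varphi x,y)$ and $g(\varphi x,\varphi y)$, with coefficients that are polynomials in $\alpha$, $\beta$, $a$, $b$, $c$ and $n$.

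Next I would normalize all tensors to the basis $\{g,\,g(\cdot,\varphi\,\cdot),\,\eta\otimes\eta\}$: the symmetry $g(\varphi x,y)=g(x,\varphi y)$ from (\ref{2}), the compatibility $g(\varphi x,\varphi y)=g(x,y)-\eta(x)\eta(y)$ from (\ref{1}), and the definition $\widetilde g(x,y)=g(x,\varphi y)+\eta(x)\eta(y)$ handle this. After collecting, the left-hand side becomes $A\,g(x,y)+B\,g(x,\varphi y)+C\,\eta(x)\eta(y)$ for explicit scalars $A,B,C$ containing the unknowns $\lambda,\mu,\nu$ linearly. Since these three symmetric tensors are linearly independent on an apapR manifold (the first is nondegenerate, the second vanishes on $(\xi,\xi)$ while the third does not, and they split by $\varphi$-parity), each of $A,B,C$ must vanish separately. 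The equation $B=0$ determines $\mu$, $A=0$ then determines $\lambda$, and $C=0$ determines $\nu$; direct comparison with the stated formulas concludes the argument.

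The only real obstacle is bookkeeping: the $-\alpha g(\varphi x,\varphi y)$ term from (\ref{52}) redistributes between the $g$-slot and the $\eta\otimes\eta$-slot, the $\mu\widetilde g$ term splits between the $g(\cdot,\varphi\,\cdot)$-slot and the $\eta\otimes\eta$-slot, and the coefficients $(2n-1)$, $(2n-2)$, $(2n+1)$, $(2n+2)$ coming from (\ref{111}) must be combined consistently. No additional geometric input beyond Theorem~4 and the elementary identities (\ref{1}), (\ref{2}) is needed.
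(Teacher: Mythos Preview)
Your proposal is correct and follows essentially the same route as the paper: the paper's proof simply says to substitute (\ref{52}) and (\ref{111}) into the soliton identity (\ref{51}) and infer the assertion, and your more detailed plan of replacing $\tfrac12\mathcal L_\xi g+\mathrm{Ric}$ via (\ref{51}), rewriting everything in the basis $\{g,\,g(\cdot,\varphi\,\cdot),\,\eta\otimes\eta\}$ using (\ref{1}) and the definition of $\widetilde g$, and then matching coefficients is exactly how that substitution is carried out. There is no genuine difference in strategy, only in the level of bookkeeping made explicit.
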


\begin{proof}
Let the para-Sasaki-like manifold $(M,\varphi,\xi,\eta,g)$ admit a para-Ricci-like soliton with potential $\xi$ and constants $(a,b,c)$. Then, the relation (\ref{51}) is valid. If (\ref{52}) and (\ref{111}) are substituted in (\ref{51}), we infer the assertion.

\end{proof}

\begin{theorem}
Let $(M,\varphi,\xi,\eta,g)$ be a para-Sasaki-like Riemannian manifold of dimension $(2n+1)$ and let it admit a para-Ricci-like soliton with constants $(a,b,c)$ whose potential vector field $v$ satisfies the condition $v=k\xi$, i.e., it is pointwise collinear with the Reeb vector field $\xi$, where $k$ is a differentiable function on $M$. Then, $M$ also admits a para-Ricci-like soliton with respect to the generalized symmetric metric conneciton $\overline{\nabla}$ with constants $(\lambda,\mu,\nu)$ determined by 
\begin{equation}
\label{ }
\begin{array}{l}
 \lambda=a+\alpha b-\beta^2+\beta+(2n-1)\alpha^2         \\
\mu=b+\beta b+(\alpha-\alpha\beta)(2n-2)      \\
\nu=\alpha b-(2n+1)\beta+\beta^2+(1-2n)\alpha^2+c-\beta b+(\alpha\beta-\alpha)(2n-2)         
\end{array}
\end{equation}
\end{theorem}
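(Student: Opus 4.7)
The plan is to mimic the proof of Theorem \ref{thm2} with $v=\xi$ replaced by $v=k\xi$. First I would compute $\overline{\mathcal{L}}_{k\xi}g$: by the Leibniz rule $\overline{\nabla}_x(k\xi)=(xk)\xi+k\,\overline{\nabla}_x\xi$, and the proposition of Section~3 giving $\overline{\nabla}_x\xi=(1-\beta)\varphi x-\alpha\varphi^2 x$, combined with $\varphi^2 x=x-\eta(x)\xi$ from (\ref{1}), yields $\overline{\nabla}_x\xi=(1-\beta)\varphi x-\alpha\bigl(x-\eta(x)\xi\bigr)$. Substituting into $(\overline{\mathcal{L}}_vg)(x,y)=g(\overline{\nabla}_xv,y)+g(x,\overline{\nabla}_yv)$ produces an explicit formula for $\overline{\mathcal{L}}_{k\xi}g$ in which the scalar-derivative terms $(xk)\eta(y)+(yk)\eta(x)$ are exactly the same as those appearing in $\mathcal{L}_{k\xi}g$. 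Hence, in agreement with (\ref{52}) scaled by $k$,
\[
\overline{\mathcal{L}}_{k\xi}g(x,y)-\mathcal{L}_{k\xi}g(x,y)=-2k\alpha\,g(\varphi x,\varphi y)-2k\beta\,g(\varphi x,y).
\]

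Next, I would rewrite the soliton hypothesis (\ref{53}) with $v=k\xi$ as $\tfrac12\mathcal{L}_{k\xi}g+\mathrm{Ric}=-a\,g-b\,\widetilde g-c\,\eta\otimes\eta$ and use the identity
\[
\tfrac12\overline{\mathcal{L}}_{k\xi}g+\overline{\mathrm{Ric}}=\bigl(\tfrac12\mathcal{L}_{k\xi}g+\mathrm{Ric}\bigr)+\tfrac12\bigl(\overline{\mathcal{L}}_{k\xi}g-\mathcal{L}_{k\xi}g\bigr)+\bigl(\overline{\mathrm{Ric}}-\mathrm{Ric}\bigr),
\]
substituting the soliton hypothesis into the first bracket, the previously computed correction into the second bracket, and the Ricci tensor relation (\ref{111}) into the third. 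This expresses $\tfrac12\overline{\mathcal{L}}_{k\xi}g+\overline{\mathrm{Ric}}$ as a concrete linear combination of $g$, $g(\cdot,\varphi\cdot)$, and $\eta\otimes\eta$.

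The final step is to regroup. Using $\widetilde g=g(\cdot,\varphi\cdot)+\eta\otimes\eta$ and $g(\varphi\cdot,\varphi\cdot)=g-\eta\otimes\eta$, I rewrite the right-hand side in the canonical form $-\lambda\,g-\mu\,\widetilde g-\nu\,\eta\otimes\eta$; reading off the coefficient of $g$ gives $\lambda$, the coefficient of $g(\cdot,\varphi\cdot)$ gives $\mu$, and the coefficient of $\eta\otimes\eta$ gives $\mu+\nu$, from which $\nu$ is recovered. The main obstacle is purely organizational: (\ref{111}) together with the $k$-dependent Lie-derivative terms produces a sizable collection of summands that must be tracked and regrouped carefully. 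No new geometric identity beyond those already established in Sections~2 and 3 is required.
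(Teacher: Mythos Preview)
Your strategy is exactly the paper's: compute $\overline{\mathcal{L}}_{k\xi}g-\mathcal{L}_{k\xi}g$, feed in the soliton hypothesis (\ref{53}) and the Ricci relation (\ref{111}), and regroup in the basis $\{g,\;g(\cdot,\varphi\cdot),\;\eta\otimes\eta\}$. The paper's proof is a two-line version of precisely this.

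There is, however, one point your plan glosses over. After the regrouping you describe, the coefficients you read off will contain the \emph{function} $k$: for instance the $g$-coefficient comes out as $a+k\alpha-\beta^2+\beta+(2n-1)\alpha^2$ and the $g(\cdot,\varphi\cdot)$-coefficient as $b+k\beta+\cdots$. The constants asserted in the theorem have $b$ in exactly the places where $k$ appears in your output (compare $a+\alpha b$, $b+\beta b$, $\alpha b-\beta b$). To pass from your expression to the stated triple $(\lambda,\mu,\nu)$ you therefore need the additional fact that on a para-Sasaki-like manifold a vertical potential $v=k\xi$ of a para-Ricci-like soliton with constants $(a,b,c)$ forces $k$ to be the constant $b$; this is a result from the cited literature (\cite{bib2}) rather than something contained in Sections~2--3. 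So your closing sentence ``No new geometric identity beyond those already established in Sections~2 and~3 is required'' is not quite right: without invoking $k=b$ you obtain an \emph{almost} para-Ricci-like soliton whose coefficients are functions, not the constant triple in the statement. The paper's own proof is equally silent on this step, so you are not diverging from it, but you should be aware that the substitution $k\mapsto b$ is where the argument actually closes.
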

\begin{proof}
Bearing in mind (\ref{15}) and  (\ref{54}), we compute Lie derivative of $g$ along $v=k\xi$ corresponding the generalized symmetric metric conneciton $\overline{\nabla}$ as follows
\begin{equation}
\label{ }
\overline{\mathcal{L}}_{v} g(x,y)=\mathcal{L}_{v}g(x,y)+2\alpha k\eta(x)\eta(y)-2\alpha k g(x,y)-2\beta g(x,\varphi y).
\end{equation}
Withdrawing $\mathcal{L}_{v}g(x,y)$ in the latter equality and using (\ref{111}) and (\ref{53}), we complete the proof.
\end{proof}

Note that if $(M,\varphi,\xi,\eta,g)$ admits a para-Ricci-like soliton with any potential $v$, it may not admit a para-Ricci-like soliton with respect to a generalized symmetric metric connection.

\section{Examples}
In this section, we construct two examples of the para-Sasaki-like manifold admitting a generalized symmetric metric connection and after that validate our results.

\subsection{Example 1}
In \cite{bib3}, a $3-$dimensional Lie group $L$ is considered with a global basis $\{\xi=e_0,e_1,e_2\}$ of the left invariant vector fields on $L$ such that the commutators of its associated Lie algebra are determined as follows
\begin{equation}
\label{eq33}
[e_0,e_1]=-e_2,\hspace{.5cm} [e_0,e_2]=-e_1, \hspace{.5cm} [e_1,e_2]=0.
\end{equation}
$L$ is endowed with an apapR structure $( \varphi, \xi, \eta, g)$ by
\begin{equation}
\label{eq34}
\begin{array}{ll}
 \varphi(e_0)=0,  \hspace{.5cm} \varphi(e_1)=e_2,\hspace{.5cm} \varphi(e_2)=e_1, \hspace{.5cm} \eta(e_0)=1,\\
    g(e_0,e_0)=g(e_1,e_1)=g(e_2,e_2) =1,   \\
   g(e_i,e_j)=0 \text{ for } i,j\in \{0,1,2\}, \ \ \ i\neq j.
\end{array}
\end{equation}
In the same paper, it is obtained that the basic components of the Levi-Civita connection $\nabla$, $R_{ijkl}=R(e_i,e_j,e_k,e_l)$ and $\text{Ric}_{ij}=\text{Ric}(e_i,e_j)$ are calculated by
\begin{equation}
\label{eq35}
\begin{array}{l}
\nabla_{e_1}e_0=e_2,\hspace{.5cm} \nabla_{e_2}e_0=e_1,\hspace{.5cm} \nabla_{e_1}e_2=\nabla_{e_2}e_1=-e_0;\\
R_{1221}=-R_{1001}=-R_{2002}=1, \hspace{.5cm} \text{ Ric}_{00}=-2.
\end{array}
\end{equation}
It is proved that the constructed manifold $L$ is a para-Einstein-like para-Sasaki-like Riemannian manifold with constants $(a,b,c)=(0,0,-2)$. Moreover,  $L$ is a para-Ricci-like soliton with constants $(0,-1,3)$.
The only non-zero component of  $(\mathcal{L}_{\xi} g)(e_i,e_j)=(\mathcal{L}_{\xi} g)_{ij}$ is given by $(\mathcal{L}_{\xi} g)_{12}=2$.

Now, let us consider the generalized symmetric metric connection $\overline{\nabla}$ on $L$. Taking into account (\ref{15}) and (\ref{eq35}), we compute the non-zero components of $\overline{\nabla}$ as follows
\begin{equation}
\label{eq36}
\begin{array}{l}
\overline{\nabla}_{e_1}e_0=(1-\beta)e_2-\alpha e_1,\hspace{.5cm} \overline{\nabla}_{e_2}e_0=(1-\beta)e_1-\alpha e_2,\\
\overline{\nabla}_{e_1}e_2=\overline{\nabla}_{e_2}e_1=(\beta-1)e_0, \hspace{.5cm} \overline{\nabla}_{e_1}e_1=\overline{\nabla}_{e_2}e_2=\alpha e_0.
\end{array}
\end{equation}
By virtue of (\ref{8}) and (\ref{eq36}), the components $\overline{R}_{ijkl}=\overline{R}(e_i,e_j,e_k,e_l)=g(\overline{R}(e_i,e_j)e_k,e_l)$ excluding the well-known their symmetries and antisymmetries are calculated  as follows:
\begin{equation}
\label{eq37}
\begin{array}{l}
\overline{R}_{0110}=-\overline{R}_{0202}=(\beta-1),\\
\overline{R}_{0102}=-\alpha\\
\overline{R}_{1212}=\alpha^2-(\beta-1)^2
\end{array}
\end{equation}
Using above relations, we compute the nonzero components of the Ricci tensor as follows:
\begin{equation}
\label{eq38}
\begin{array}{lll}
\overline{\mathrm{Ric}}_{00}=2(\beta-1),  \  \  \ &  \overline{\mathrm{Ric}}_{11}=\overline{\mathrm{Ric}}_{22}=\beta(\beta-1)-\alpha^2, \  \ \  &  \overline{\mathrm{Ric}}_{12}=\alpha.\\
\end{array}
\end{equation}
Taking into account (\ref{eq38}), we obtain $\overline{\text{scal}}=2(\beta^2-\alpha^2-1)$.

Therefore, by virtue of (\ref{eq38}), it can be easily seen that the manifold $(L,\phi,\xi,\eta,g)$ admits a para-Einstein-like manifold with regard to the generalized symmetric metric connection $\overline{\nabla}$ with constants $(\lambda,\mu,\nu)$ determined by
\begin{equation}
\label{eq39}
\begin{array}{lll}
\lambda=\beta(\beta-1)-\alpha^2,  \  \  \ &  \mu=\alpha, \  \ \  & \nu=\alpha(\alpha-1)-(\beta-1)(\beta-2).\\
\end{array}
\end{equation}
The non-zero components of $\mathcal{L}_{\xi}g$ are 
determined by the following equalities:
\begin{equation}
\label{eq55}
\begin{array}{lll}
(\overline{\mathcal{L}}_{\xi} g)_{11}=(\overline{\mathcal{L}}_{\xi} g)_{22}=-2 \alpha,  \  \   & (\overline{\mathcal{L}}_{\xi} g)_{12}=2(1-\beta).\\
\end{array}
\end{equation}
Therefore, using (\ref{eq38}) and (\ref{eq55}), we establish that the constructed manifold admits a paraRicci-like soliton with regard to the generalized symmetric metric connection $\overline{\nabla}$ with the following constants:
\begin{equation}
\label{eq41}
\begin{array}{lll}
\lambda=\alpha^2+\alpha+\beta-\beta^2,  \  \  \ &  \mu=\beta-\alpha-1, \  \ \  & \nu=(\beta-1)(\beta+3)-\alpha^2.\\
\end{array}
\end{equation}
In conclusion, the constructed $3-$dimensional example of a para-Sasaki-like Riemannian manifold $(L,\phi,\xi,\eta,g)$ with the generalized symmetric metric connection with the results in (\ref{eq39}) and (\ref{eq41}) support the proven assertions in Theorem \ref{thm1} and \ref{thm2}.

\subsection{Example 2}
Let us consider as an explicit example a $5-$dimensional Para-Sasaki-like manifold $G$ given in \cite{Ivanov}. Then, the Lie group $G$ has a basis of left-invariant vector fields $\{e_0,\ldots,e_4\}$ with corresponding Lie algebra determined as follows
\begin{equation*}
\begin{array}{l}
 [e_0,e_1]=pe_2-e_3+qe_4,  \ \  \   [e_0,e_2]=-pe_1-qe_3-e_4, \\
\end{array}
\end{equation*}
\begin{equation*}
\begin{array}{l}
 [e_0,e_3]=-e_1+qe_2+pe_4,  \ \  \  [e_0,e_4]=-qe_1-e_2-pe_3,\\
\end{array}
\end{equation*}
where $p,q\in \mathbb{R}$. The Lie group $G$ is endowed with an almost paracontact almost paracomplex structure $(\varphi,\xi,\eta,g)$ as follows:
\begin{equation}
\begin{array}{l}
  \xi=e_0, \ \  \ \varphi e_1=e_3, \ \  \ \varphi e_2=e_4,\ \  \ \varphi e_3=e_1,\ \  \ \varphi _4=e_2,     \\
   g(e_i,e_i)=1, \ \  \ g(e_i,e_j)=0, \ \  \ i,j\in \{0,1,\ldots,4\}, \ i\neq j.
\end{array}
\end{equation}
The non-zero components of the Levi-Civita connection are given in the following way:
\begin{equation}
\begin{array}{l}
 \nabla_{e_0}e_1=p e_2+qe_4, \ \  \ \nabla_{e_1} e_0=e_3, \ \  \ \nabla_{e_0} e_2=-pe_1-qe_3,\ \  \ \nabla_{e_2} e_0=e_4,     \\
 \nabla_{e_0}e_3=q e_2+p e_4, \ \  \   \nabla_{e_3} e_0=e_1, \ \  \ \nabla_{e_0} e_4=-qe_1-pe_3,\ \  \ \nabla_{e_4} e_0=e_2,     \\
  \nabla_{e_1}e_3= \nabla_{e_2} e_4=\nabla_{e_3} e_1= \nabla_{e_4} e_2=-e_0.     \\
 \end{array}
\end{equation}
In \cite{bib2}, it is proved that $(G,\varphi,\xi,\eta,g)$ is a para-Sasaki-like manifold and \\ $\eta-$Einstein with constants $(a,b,c)=(0,0,-4)$. Moreover, it admits a para-Ricci-like soliton with potential $\xi$ constants $(0,-1,5)$. The only non-zero component of the Ricci tensor $\mathrm{Ric}$ is $\mathrm{Ric}_{00}=-4$. The non-zero of the components $R_{ijkl}=R(e_i,e_j,e_k,e_l)=g(R(e_i,e_j)e_k,e_l)$ are determined by the following equalities with the well-known their symmetries and antisymmetries:
\begin{equation}\label{eq40}
\begin{array}{l}
R_{0110}=R_{0220}=R_{0330}=R_{0440}=-1, \\
R_{1234}=R_{1432}=R_{1331}=R_{2442}=1.
\end{array}
\end{equation}
The non-zero components of $\mathcal{L}_{\xi}g$ are 
determined by the following equalities:
\begin{equation*}
\begin{array}{l}
( \mathcal{L}_{\xi} g)_{13}=( \mathcal{L}_{\xi} g)_{24}=( \mathcal{L}_{\xi} g)_{31}=( \mathcal{L}_{\xi} g)_{42}=2. \\
\end{array}
\end{equation*}
The non-zero components of the generalized symmetric metric connection $\overline{ \nabla}$ are given in the following way:
\begin{equation}
\begin{array}{l}
 \overline{\nabla}_{e_0}e_1=\nabla_{e_0}e_1=p e_2+qe_4, \ \   \  \ \ \ \overline{\nabla}_{e_1} e_0=(1-\beta)e_3-\alpha e_1,   \\
 \overline{\nabla}_{e_0}e_2=\nabla_{e_0}e_2=-p e_1-qe_3, \ \   \  \ \overline{\nabla}_{e_2} e_0=(1-\beta)e_4-\alpha e_2,  \\
 \overline{\nabla}_{e_0}e_3=\nabla_{e_0}e_3=q e_2+pe_4, \ \ \ \ \   \  \ \overline{\nabla}_{e_3} e_0=(1-\beta)e_1-\alpha e_3,   \\
 \overline{\nabla}_{e_0}e_4=\nabla_{e_0}e_4=-q e_1-pe_3, \ \ \  \  \ \overline{\nabla}_{e_4} e_0=(1-\beta)e_2-\alpha e_4,   \\
 \overline{\nabla}_{e_1}e_3= \overline{\nabla}_{e_3}e_1= \overline{\nabla}_{e_2}e_4= \overline{\nabla}_{e_4}e_2=(\beta-1)e_0,  \\
  \overline{\nabla}_{e_i}e_i=\alpha e_0, \ \ i\neq 0.
 \end{array}
\end{equation}
Taking into account (\ref{8}) and (\ref{eq40}), we compute the components $\overline{R}_{ijkl}=\overline{R}(e_i,e_j,e_k,e_l)=g(\overline{R}(e_i,e_j)e_k,e_l)$ excluding the well-known their symmetries and antisymmetries as follows:
\begin{equation}
\begin{array}{l}
\overline R_{0101}=-\overline R_{0440}=\overline R_{0202}=-\overline R_{0330}=1-\beta,  \\
\overline R_{0103}=\overline R_{0402}=-\alpha,  \\
\overline R_{1412}=-\overline R_{3423}=-\overline R_{3441}=-\overline R_{2312}=\alpha(\beta-1),  \\
\overline R_{3434}=\overline R_{2323}=\overline R_{1414}=\overline R_{1212}=\alpha^2,\\
\overline R_{1313}=\overline R_{2424}=\alpha^2-(\beta-1)^2,\\
\overline R_{1234}=-\overline R_{2314}=(\beta-1)^2.
 \end{array}
\end{equation}
The nonzero components of the Ricci tensor $\overline {Ric}$ are calculated as follows:
\begin{equation}
\label{eq60}
\begin{array}{l}
\overline {Ric}_{00}=4(\beta-1),         \\
 \overline {Ric}_{11}=\overline {Ric}_{22}=\overline {Ric}_{33}=\overline {Ric}_{44}=\beta^2-\beta-3\alpha^2,        \\
 \overline {Ric}_{13}=\overline {Ric}_{24}=3\alpha-2\alpha\beta.        
\end{array}
\end{equation}
Hence, by using above relations the scalar curvature is obtained by $$\overline {scal}=4(\beta^2-3\alpha^2-1).$$
Moreover, the non-zero components of the Lie derivative of $g$ along $\xi$ with  connection $\overline{ \nabla}$  are given by
\begin{equation}
\label{eq61}
\begin{array}{l}
(\overline {\mathcal{L}}_{\xi} g)_{11}=(\overline {\mathcal{L}}_{\xi} g)_{22}=(\overline {\mathcal{L}}_{\xi} g)_{33}=(\overline {\mathcal{L}}_\xi g)_{44}=-2\alpha,          \\
(\overline {\mathcal{L}}_\xi g)_{13}=(\overline {\mathcal{L}}_\xi g)_{24}=2(1-\beta)         
\end{array}
\end{equation}
By virtue of (\ref{eq60}) and ({\ref{eq61}}),  the considered manifold  $(G,\varphi,\xi,\eta,g)$ is para-Einstein-like manifold with constants $(\lambda,\mu,\nu)$ given by
\begin{equation}
\label{eq62}
\begin{array}{l}
\lambda=\beta^2-\beta-3\alpha^2,          \\
 \mu=-2\alpha\beta+3\alpha,       \\
\nu=3\alpha^2+2\alpha\beta-3\alpha+5\beta-\beta^2-4.         
\end{array}
\end{equation}
and it admits a para-Ricci-like soliton with potential $\xi$ with constants $(\lambda,\mu,\nu)$ determined by
\begin{equation}
\label{eq63}
\begin{array}{l}
\lambda=3\alpha^2+\alpha-\beta^2+\beta,          \\
\mu=2\alpha\beta+\beta-3\alpha-1,    \\
\nu=\beta^2-3\alpha^2-2\alpha\beta-6\beta+2\alpha+5.     
\end{array}
\end{equation}
Consequently, the obtained manifold $(G,\phi,\xi,\eta,g)$ with the results in (\ref{eq62}) and (\ref{eq63}) support the proven assertions in Theorem \ref{thm1} and \ref{thm2}.

\begin{acknowledgement}
The first author was supported by Project 22ADP356 of Scientific Research Projects Commission, Eskişehir Technical University.

\end{acknowledgement}

\end{document}